\numberwithin{equation}{section}
\newtheorem{theorem}{Theorem}
\newtheorem{proposition}[theorem]{Proposition}
\newtheorem{lemma}[theorem]{Lemma}
\newtheorem{corollary}[theorem]{Corollary}
\theoremstyle{remark}
\newtheorem{remark}[theorem]{Remark}
\newcommand{\defcolor}[1]{\Blue{#1}}
\newcommand{\demph}[1]{\defcolor{{\sl #1}}}
\newcommand{\C}{{\mathbb C}}
\newcommand{\R}{{\mathbb R}}
\newcommand{\calL}{{\mathcal L}}
\newcommand{\calO}{{\mathcal O}}
\newcommand{\calU}{{\mathcal U}}
\newcommand{\calX}{{\mathcal X}}
\newcommand{\calZ}{{\mathcal Z}}
\newcommand{\Edot}{E_{\bullet}}
\newcommand{\Fdot}{F_{\bullet}}
\newcommand{\blambda}{\boldsymbol{\lambda}}
\DeclareMathOperator{\codim}{codim}
\DeclareMathOperator{\Sym}{Sym}
\newcommand{\Sp}{\mbox{\rm Sp}}
\newcommand{\Fl}{{\mathbb F}\ell}
\newcommand{\pr}{\mbox{\rm pr}}
\newcommand{\Gr}{\mbox{\rm Gr}}
\newcommand{\LG}{\mbox{\rm LG}}
\title[A congruence modulo four for real Schubert calculus]{A congruence modulo four for real\\
      Schubert calculus with isotropic flags}
\author{Nickolas Hein}
\address{Nickolas Hein \\
         Department of Mathematics\\
         University of Nebraska at Kearney\\
         Kearney\\
         Nebraska \ 68849\\
         USA}
\email{heinnj@unk.edu}
\urladdr{http://www.unk.edu/academics/math/faculty/About\_Nickolas\_Hein/}
\author{Frank Sottile}
\address{Frank Sottile \\
         Department of Mathematics\\
         Texas A\&M University\\
         College Station\\
         Texas \ 77843\\
         USA}
\email{sottile@math.tamu.edu}
\urladdr{http://www.math.tamu.edu/~sottile}
\author{Igor Zelenko}
\address{Igor Zelenko\\
         Department of Mathematics\\
         Texas A\&M University\\
         College Station\\
         Texas \ 77843\\
         USA}
\email{zelenko@math.tamu.edu}
\urladdr{http://www.math.tamu.edu/~zelenko}
\thanks{Research of Sottile and Hein supported in part by NSF grant DMS-1001615}
\keywords{Lagrangian Grassmannian, Wronski map, Shapiro Conjecture}
\subjclass[2010]{14N15, 14P99}
\begin{document}
%%%%%%%%%%%%%%%%%%%%%%%%%%%%%%%%%%%%%%%%%%%%%%%%%%%%%%%%%%%%%%%%%%%%%%%%%%

\begin{abstract}
 We previously obtained a congruence modulo four for the number of real solutions to many Schubert problems on
 a square Grassmannian given by osculating flags.
 Here, we consider Schubert problems given by more general isotropic flags, and prove this 
 congruence modulo four for the largest class of Schubert problems that could be expected to exhibit this
 congruence.
\end{abstract}

% make the title area
\maketitle

%%%%%%%%%%%%%%%%%%%%%%%%%%%%%%%%%%%%%%%%%%%%%%%%%%%%%%%%%%%%%%%%%%%%%%%%%%%%%%%%%%%%%%%%%%
%
\section*{Introduction}
 The number of real solutions to a system of real equations is congruent to the number of
 complex solutions modulo two.
 In~\cite{HSZ}, we established a congruence modulo four for many symmetric
 Schubert problems given by osculating flags, leaving as a conjecture a stronger form of
 that result.
 We prove this conjecture for symmetric Schubert problems given by flags that are
 isotropic with respect to a symplectic form, giving a simpler proof of a stronger 
and more basic result than that obtained in~\cite{HSZ}.

This congruence modulo four follows from a result on the real points in
fibers of a map between real varieties equipped with an involution.
When the fixed point set of the involution has codimension at least two, the number
of real points satisfies a congruence modulo four.
There is an involution acting on symmetric Schubert problems given by
isotropic flags and we can compute the dimension
of the fixed point locus in a universal family of Schubert problems.
Our inability to compute this dimension when the flags are osculating was the obstruction
to establishing the conjecture in~\cite{HSZ}.

The congruence modulo four often implies a non-trivial lower bound on the number of
real solutions to a symmetric Schubert problem given by isotropic flags.
Similar lower bounds and congruences in real algebraic geometry have been of significant
interest~\cite{AH,EG01,FK,Lower,IKS03,IKS04,OT1,OT2,SS,W}.
Another topological study was recently made of this phenomena in the Schubert
calculus~\cite{FM}, and delicate lower bounds~\cite{MT} were given by computing the
signature of a hermitian matrix arising in the proof of the Shapiro
Conjecture~\cite{MTV}.  

In Section~\ref{S:one} we state our main result, whose proof 
occupies Section~\ref{S:two}.

%%%%%%%%%%%%%%%%%%%%%%%%%%%%%%%%%%%%%%%%%%%%%%%%%%%%%%%%%%%%%%%%%%%%%%%%%%
\section{Symmetric Schubert Problems}\label{S:one}

Let $V$ be a complex vector space of dimension $2m$ equipped with a nondegenerate alternating
form $\langle\ ,\ \rangle\colon V\otimes V\to \C$.
Write $\overline{W}$ for the complex conjugate of a point, vector, subspace, or variety $W$.
A variety $W$ is \demph{real} if it is defined by real equations; equivalently, if
$\overline{W}=W$.
Write \defcolor{$W(\R)$} for the real points of a real variety $W$, those that are fixed
by complex conjugation.
Write \defcolor{$S_a$} for the symmetric group of permutations of $\{1,\dotsc,a\}$.

The set of $m$-dimensional linear subspaces of $V$ forms the Grassmannian, $\Gr(m,V)$,
which is a manifold of dimension $m^2$.
A \demph{flag} is a sequence 
$\Fdot\colon F_1\subsetneq F_2\subsetneq \dotsb\subsetneq F_{2m}=V$ of linear subspaces of
$V$ with $\dim F_i=i$.
A \demph{partition} is a weakly decreasing sequence of integers 
$\lambda\colon m\geq\lambda_1\geq\dotsb\geq\lambda_m\geq 0$.
A flag $\Fdot$ and a partition $\lambda$ determine a Schubert subvariety of $\Gr(m,V)$,
\[
  \defcolor{X_\lambda\Fdot}\ :=\ 
   \{H\in\Gr(m,V)\;\mid\; \dim H\cap F_{m+i-\lambda_i}\geq i\ \mbox{ for }i=1,\dotsc,m\}\,.
\]
This has codimension $\defcolor{|\lambda|}:=\lambda_1+\dotsb+\lambda_m$ in $\Gr(m,V)$.

Let  $\blambda=(\lambda^1,\dotsc,\lambda^s)$ be a list of partitions and 
 $\Fdot^1,\dotsc,\Fdot^s$ be  general flags.
By Kleiman's Transversality Theorem~\cite{KL74}
the intersection
 \begin{equation}\label{Eq:SchubProblem}
  X_{\lambda^1}\Fdot^1\,\cap\,
  X_{\lambda^2}\Fdot^2\,\cap\,\dotsb\,\cap\,
  X_{\lambda^s}\Fdot^s\,.
 \end{equation}
is either empty or has dimension $\dim\Gr(m,V)-|\lambda^1|-\dotsb-|\lambda^s|$.
Call $\blambda$ a \demph{Schubert problem} if this expected dimension is zero so
that~\eqref{Eq:SchubProblem} is either empty or consists of finitely many points. 
The number of points \demph{$d(\blambda)$} in~\eqref{Eq:SchubProblem} is independent of the choice
of general flags. 
We will assume that $d(\blambda)\neq 0$.
A choice of flags is an \demph{instance} of the Schubert problem $\blambda$;
its \demph{solutions} are the points in~\eqref{Eq:SchubProblem}.
The instance is \demph{real} if for all $i$, there is some $j$ with
$\overline{\Fdot^i}=\Fdot^j$ and $\lambda^i=\lambda^j$, for then~\eqref{Eq:SchubProblem}
is stable under complex conjugation. 

%%%%%%%%%%%%%%%%%%%%%%%%%%%%%%%%%%%%%%%%%%%%%%%%%%%%%%%%%%%%%%%%%%%%%%%%%%%%%%%%%
\begin{remark}\label{R:osculating}
 Osculating flags provide a rich source of isotropic flags.
 As explained in Section~3 of~\cite{HSZ}, a rational normal curve
 $\gamma\colon\C\to V$ 
 induces a symplectic form on $V$ and a symplectic form on $V$ gives rise to a
 rational normal curve, and we may assume that $\gamma$ is real in that
 $\gamma(\overline{t})=\overline{\gamma(t)}$.
 If $\gamma$ is a rational normal curve corresponding to the symplectic form
 $\langle\,,\,\rangle$, then every osculating flag is isotropic.
 (For $t\in\C$, the \demph{osculating flag $\Fdot(t)$} is the flag whose $i$-plane $F_i(t)$ is
 spanned by $\gamma(t)$ and its derivatives $\gamma'(t),\dotsc,\gamma^{(i-1)}(t)$.)

 The study of real solutions to Schubert problems given by flags osculating at real points in Grassmannians
 and flag manifolds has been quite rich and fruitful~\cite{EG02,MSJ,MTV,MTV_R,Purbhoo,So99}.
\end{remark}
%%%%%%%%%%%%%%%%%%%%%%%%%%%%%%%%%%%%%%%%%%%%%%%%%%%%%%%%%%%%%%%%%%%%%%%%%%%%%%%%%

A partition $\lambda$ is represented by its Young diagram, which is a left-justified array
of boxes with $\lambda_i$ boxes in row $i$.
We display some partitions with their Young diagrams,
\[
   (2,1,1)\ \longleftrightarrow\ \raisebox{-3.5pt}{\includegraphics{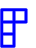}}\,,
   \qquad
   (2,2)\ \longleftrightarrow\ \raisebox{-1.75pt}{\includegraphics{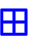}}\,,
   \qquad\mbox{and}\qquad
   (3,2,1)\ \longleftrightarrow\ \raisebox{-3.5pt}{\includegraphics{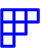}}\,.
\]
A partition $\lambda$ is \demph{symmetric} if it is symmetric about its main diagonal,
that is, if $\lambda=\lambda'$, where \defcolor{$\lambda'$} is the transpose of $\lambda$.
The partitions $(2,2)$ and $(3,2,1)$ are symmetric while $(2,1,1)$ is not.
A Schubert problem $\blambda$ is \demph{symmetric} if every partition in $\blambda$ is symmetric.

Recall that our vector space $V$ was equipped with a nondegenerate alternating bilinear
form $\langle\,,\,\rangle$.
A linear subspace $W$ of $V$ has annihilator $\angle(W)$ under $\langle\,,\,\rangle$,
\[
    \defcolor{\angle(W)}\ :=\ 
     \{v\in V\,\mid\, \langle v,w\rangle=0\ \mbox{\rm for all }w\in W\}\,,
\]
and we have $\dim W + \dim \angle(W)=2m$.
This induces a map $H\mapsto \angle(H)$ on $\Gr(m,V)$ called the \demph{Lagrangian involution}.
Given a flag $\Fdot$, we get the flag \demph{$\angle(\Fdot)$} whose $i$-plane is $\angle(F_{2m-i})$.
A flag $\Fdot$ is \demph{isotropic} if $\angle(\Fdot)=\Fdot$.

The \demph{length, $\ell(\lambda)$} of a symmetric partition is the number of boxes on its
main diagonal, so $\ell(2,2)=2$ while $\ell(2,1)=1$.
We state our main theorem.

%%%%%%%%%%%%%%%%%%%%%%%%%%%%%%%%%%%%%%%%%%%%%%%%%%%%%%%%%%%%%%%%%%%%%%%%%%
\begin{theorem}\label{Th:main}
 Suppose that $\blambda=(\lambda^1,\dotsc,\lambda^s)$ is a symmetric Schubert problem on
 $\Gr(m,V)$ and that $\Fdot^1,\dotsc,\Fdot^s$ are isotropic flags defining a
 real instance of the Schubert problem $\blambda$ such that~$\eqref{Eq:SchubProblem}$ is
 finite. 
 If\/ $\sum_i\ell(\lambda^i)\geq m{+}4$, then the
 number (counted with multiplicity) of real points in~$\eqref{Eq:SchubProblem}$ is
 congruent to the number $d(\blambda)$ of complex points, modulo four. 
\end{theorem}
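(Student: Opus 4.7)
The plan is to construct a universal family for instances of the Schubert problem $\blambda$, equip it with an involution $\sigma$ coming from the Lagrangian structure on $V$, verify that the $\sigma$-fixed locus has complex codimension at least two, and then invoke the general congruence-modulo-four principle for fibers of real maps with such an involution. Let $\calY$ parametrize $s$-tuples $(\Fdot^1,\dotsc,\Fdot^s)$ of isotropic flags in $V$, and let $\calX\subset\calY\times\Gr(m,V)$ be the incidence variety of tuples $((\Fdot^1,\dotsc,\Fdot^s),H)$ with $H\in X_{\lambda^i}\Fdot^i$ for each $i$. The projection $\pi\colon\calX\to\calY$ is generically finite of degree $d(\blambda)$, so its fibers over $\calY(\R)$ realize exactly the real instances of $\blambda$.

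The identity $\angle(X_\lambda\Fdot)=X_{\lambda'}\angle(\Fdot)$, combined with the symmetry of every $\lambda^i$ and the isotropy of every $\Fdot^i$, shows that
\[
\sigma\colon((\Fdot^1,\dotsc,\Fdot^s),H)\;\longmapsto\;((\Fdot^1,\dotsc,\Fdot^s),\angle(H))
\]
is a well-defined involution on $\calX$ that fixes $\calY$ pointwise and commutes with complex conjugation. In any real fiber, the Klein four-group generated by $\sigma$ and complex conjugation partitions solutions into orbits; a short analysis shows that when neither $\sigma$-fixed points nor conjugation-twisted points (those with $\sigma(x)=\overline{x}\neq x$) occur in the fiber, every non-real solution sits in an orbit of size four, so the count of real solutions is congruent to $d(\blambda)$ modulo four. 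The general principle developed in~\cite{HSZ} extends this congruence, counted with multiplicity, to every real instance with finite fiber provided the $\sigma$-fixed locus $\calX^\sigma\subset\calX$ has complex codimension at least two.

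The central task is thus the codimension computation. Since $\calX^\sigma$ consists of tuples with $H\in\LG(V)$, I project $\calX^\sigma$ to $\LG(V)$ and use the transitive action of $\Sp(V)$ to identify the fiber over $H\in\LG(V)$ with a product of loci $\{\Fdot\in\Sp(V)/B:H\in X_{\lambda^i}\Fdot\}$ in the symplectic flag variety. Each factor should be a Schubert-type subvariety of codimension $(|\lambda^i|+\ell(\lambda^i))/2$; combining this with $\dim\calX=\dim\calY=s\cdot m^2$, $\dim\LG(V)=\binom{m+1}{2}$, and $\sum_i|\lambda^i|=m^2$ yields
\[
\codim(\calX^\sigma,\calX)\;=\;\tfrac{1}{2}\Bigl(\sum_i\ell(\lambda^i)-m\Bigr),
\]
which is at least two exactly when $\sum_i\ell(\lambda^i)\geq m+4$. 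The main technical obstacle is establishing the codimension formula $(|\lambda|+\ell(\lambda))/2$ for the locus of isotropic flags $\Fdot$ satisfying a symmetric Schubert condition relative to a fixed Lagrangian subspace: this amounts to the classical codimension of a Lagrangian Schubert subvariety of $\LG(V)$ indexed by a symmetric partition, which I expect to derive by transferring the question from $\Sp(V)/B$ to $\LG(V)$ via the projection onto the Lagrangian factor and invoking the standard correspondence between symmetric partitions and their strict-partition counterparts in the symplectic Schubert calculus.
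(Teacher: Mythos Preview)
Your codimension computation is exactly the paper's argument: project $\calX^\sigma$ to $\LG(V)$, identify the fiber with a product of Lagrangian Schubert varieties of codimension $\|\lambda^i\|=(|\lambda^i|+\ell(\lambda^i))/2$, and obtain $\codim\calX^\sigma=\tfrac12\bigl(\sum_i\ell(\lambda^i)-m\bigr)$.  So that part is fine.

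There are, however, two genuine gaps.

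\textbf{The base $\calY=(\Fl)^s$ is too small.}  You assert that ``its fibers over $\calY(\R)$ realize exactly the real instances of $\blambda$,'' but this is false.  A point of $\calY(\R)$ is an $s$-tuple of \emph{real} isotropic flags, whereas a real instance of $\blambda$ only requires that the \emph{multiset} $\{(\Fdot^i,\lambda^i)\}$ be conjugation-stable; individual flags may occur in complex-conjugate pairs.  The paper fixes this by taking the quotient of $(\Fl)^s$ by the subgroup $S_{\blambda}\subset S_s$ of partition-preserving permutations, obtaining a base $\calZ_{\blambda}$ whose real points capture all real instances.  One must also check that $\calZ_{\blambda}$ is smooth and $\calZ_{\blambda}(\R)$ connected, which the paper does via an explicit $S_a$-invariant affine cover of $(\Fl)^a$ by copies of $(\C^{m^2})^a$.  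Without this quotient your argument proves the congruence only for instances given by real flags.

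\textbf{You have not produced the reference point.}  The principle you invoke (Lemma~5 of~\cite{HSZ}) compares two fibers in the same real connected component, at least one of which must be finite with no $\sigma$-fixed points.  The codimension bound alone does not supply such a point; it only says the bad locus is small.  The paper exhibits one by first taking real osculating flags $\Fdot(t_1),\dotsc,\Fdot(t_s)$, where the Mukhin--Tarasov--Varchenko theorem gives $d(\blambda)$ real solutions, and then perturbing to general real isotropic flags so that Kleiman transversality forces the Lagrangian intersection $\bigcap_i Y_{\lambda^i}\Edot^i$ to have negative expected dimension (using $\sum_i\ell(\lambda^i)\ge m+4$), hence to be empty.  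This step is essential and is missing from your outline.
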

%%%%%%%%%%%%%%%%%%%%%%%%%%%%%%%%%%%%%%%%%%%%%%%%%%%%%%%%%%%%%%%%%%%%%%%%%%

%%%%%%%%%%%%%%%%%%%%%%%%%%%%%%%%%%%%%%%%%%%%%%%%%%%%%%%%%%%%%%%%%%%%%%%%%%
\begin{remark}
 We show in Remark~\ref{R:last} that $\sum_i\ell(\lambda^i)\geq m$ and this sum has the
 same parity as $m$, so that the condition in Theorem~\ref{Th:main} for this congruence
 modulo four is that $\sum_i\ell(\lambda^i)$ is not equal to $m$ or to $m{+}2$, which is
 very mild.
\end{remark}
%%%%%%%%%%%%%%%%%%%%%%%%%%%%%%%%%%%%%%%%%%%%%%%%%%%%%%%%%%%%%%%%%%%%%%%%%%

 We use the observation of Remark~\ref{R:osculating} that osculating flags are isotropic to deduce a
corollary about Schubert problems given by osculating flags. 
Fix a real rational normal curve $\gamma\colon\C\to V$ with corresponding osculating flags $\Fdot(t)$ for
$t\in \C$, and a symmetric Schubert problem $\blambda=(\lambda^1,\dotsc,\lambda^s)$ on
$\Gr(m,V)$.
An \demph{osculating instance} of this Schubert problem is a list of distinct complex numbers $t_1,\dotsc,t_s$
which give corresponding osculating flags $\Fdot(t_1),\dotsc,\Fdot(t_s)$.
This osculating instance is \demph{real} if $\overline{t_i}=t_j$ implies that $\lambda_i=\lambda_j$.

 We deduce a corollary to Theorem~\ref{Th:main} that implies Conjecture~21 of~\cite{HSZ}, which was the 
strongest result one could reasonably expect to hold concerning this congruence modulo four for symmetric
Schubert problems.
This is strictly stronger than all congruence results obtained in~\cite{HSZ}.

%%%%%%%%%%%%%%%%%%%%%%%%%%%%%%%%%%%%%%%%%%%%%%%%%%%%%%%%%%%%%%%%%%%%%%%%%%%%%%%%%
\begin{corollary}
 Suppose that $\blambda=(\lambda^1,\dotsc,\lambda^s)$ is a symmetric Schubert problem on
 $\Gr(m,V)$ and that $\Fdot(t_1),\dotsc,\Fdot(t_s)$ are osculating flags defining a
 real instance of $\blambda$.
 If $\sum_i\ell(\lambda^i)\geq m{+}4$, then the
 number (counted with multiplicity) of real points in
\[
  X_{\lambda^1}\Fdot(t_1)\,\cap\,
  X_{\lambda^2}\Fdot(t_2)\,\cap\,\dotsb\,\cap\,
   X_{\lambda^s}\Fdot(t_s)\,.
\]
 is congruent to the number $d(\blambda)$ of complex points, modulo four.
\end{corollary}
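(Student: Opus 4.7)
The plan is a direct reduction to Theorem~\ref{Th:main}. The corollary shares the symmetric Schubert problem $\blambda$ and the length inequality $\sum_i\ell(\lambda^i)\geq m{+}4$ with the theorem, so what needs checking is only that the osculating flags $\Fdot(t_1),\dotsc,\Fdot(t_s)$ are isotropic and that, together with $\blambda$, they form a real instance in the sense of Section~\ref{S:one}.

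First I would handle isotropy. By Remark~\ref{R:osculating}, the real rational normal curve $\gamma$ determines a symplectic form $\langle\,,\,\rangle$ on $V$, and \emph{every} osculating flag $\Fdot(t)$ for $t\in\C$ is isotropic with respect to this form. So the hypothesis ``isotropic flags'' of Theorem~\ref{Th:main} is met, using the same symplectic form throughout.

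Next I would translate the two notions of real instance. Since $\gamma$ is real, $\gamma(\overline{t})=\overline{\gamma(t)}$; differentiating this identity gives $\gamma^{(i)}(\overline{t})=\overline{\gamma^{(i)}(t)}$ for all $i$, and so $\overline{F_i(t)}=F_i(\overline{t})$, which yields $\overline{\Fdot(t)}=\Fdot(\overline{t})$. The corollary's realness hypothesis ``$\overline{t_i}=t_j$ implies $\lambda^i=\lambda^j$'' then becomes exactly ``for every $i$ there is a $j$ with $\overline{\Fdot(t_i)}=\Fdot(t_j)$ and $\lambda^i=\lambda^j$'' (taking $j=i$ when $t_i\in\R$), which is the realness condition of Theorem~\ref{Th:main}.

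With isotropy and realness verified, and with the intersection finite by hypothesis of the corollary, Theorem~\ref{Th:main} applies verbatim and yields the stated congruence modulo four. I do not anticipate any serious obstacle: the whole content of the corollary is already prepared by Remark~\ref{R:osculating}, which identifies osculating flags with respect to $\gamma$ as a distinguished class of isotropic flags for the induced symplectic form, and by the compatibility of complex conjugation with the derivatives of the real curve $\gamma$.
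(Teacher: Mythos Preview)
Your reduction to Theorem~\ref{Th:main} is exactly the paper's approach: Remark~\ref{R:osculating} supplies isotropy, and the compatibility $\overline{\Fdot(t)}=\Fdot(\overline{t})$ converts the osculating realness condition into the realness condition required by the theorem.

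There is one slip. You write ``with the intersection finite by hypothesis of the corollary,'' but finiteness is \emph{not} among the corollary's hypotheses. The paper addresses this immediately after the statement: for osculating flags at distinct points the intersection is always finite, by Eisenbud--Harris~\cite{EH83}. You should invoke that result rather than claim finiteness as given; otherwise the hypothesis of Theorem~\ref{Th:main} that \eqref{Eq:SchubProblem} be finite is not verified.
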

%%%%%%%%%%%%%%%%%%%%%%%%%%%%%%%%%%%%%%%%%%%%%%%%%%%%%%%%%%%%%%%%%%%%%%%%%%%%%%%%%

We do not need to assert that the intersection consists of finitely many points, for it always
  does~\cite{EH83}.

%%%%%%%%%%%%%%%%%%%%%%%%%%%%%%%%%%%%%%%%%%%%%%%%%%%%%%%%%%%%%%%%%%%%%%%%%%
\begin{remark}
 When  $d(\blambda)$ is congruent to two modulo four and 
 $\sum_i\ell(\lambda^i)\geq m{+}4$, there will always be at least two real solutions to a
 real instance of a symmetric Schubert problem.
 Such lower bounds implied by Theorem~\ref{Th:main} occur frequently.
 Table~\ref{Ta:one} gives the total number of symmetric Schubert problems in $\Gr(m,V)$
 for small values of $m$, together with the number of those for which Theorem~\ref{Th:main} implies
 a lower bound of two.
 The timings are reported in GHz-seconds (s), GHz-hours (h), and GHz-years (y).
 %%%%%%%%%%%%%%%%%%%%%%%%%%%%%%%%%%%%%%%%%%%%%%%%%%%%%%%%%%%%%%%%%%%%%%%%%%
 \begin{table}[htb]
  \caption{Numbers of symmetric Schubert problem with a lower bound of two.}\label{Ta:one} 
  \begin{tabular}{|r||r|r|r|r|r|r|}\hline
   $m$ & 2&3&4&5&6&7\\\hline\hline
   Symmetric&1&8&81& 1037  & 16933&349844\\\hline
   Have lower bound& 0&2&14&199&3289&82753\\\hline
   Percentage& 0&25&17.3&19.2&19.4&23.7\\\hline
   Time &0.57 s&0.6 s&1.9 s&158 s&17.1 h&1.15 y\\\hline
  \end{tabular}
 \end{table}
 %%%%%%%%%%%%%%%%%%%%%%%%%%%%%%%%%%%%%%%%%%%%%%%%%%%%%%%%%%%%%%%%%%%%%%%%%%
\end{remark}
%%%%%%%%%%%%%%%%%%%%%%%%%%%%%%%%%%%%%%%%%%%%%%%%%%%%%%%%%%%%%%%%%%%%%%%%%%

%%%%%%%%%%%%%%%%%%%%%%%%%%%%%%%%%%%%%%%%%%%%%%%%%%%%%%%%%%%%%%%%%%%%%%%%%%
\section{Proof of Theorem~\ref{Th:main}}\label{S:two}

We follow the main line of argument for the results of~\cite{HSZ}.
We observe that the Lagrangian involution $H\mapsto \angle(H)$ permutes the solutions to an
instance of a symmetric Schubert problem $\blambda$ given by isotropic flags and then
construct a family $\calX_{\blambda}\to\calZ_{\blambda}$ whose base parameterizes instances of
the Schubert problem $\blambda$ given by isotropic flags and whose fibers are the
solutions to those instances.
We then estimate the codimension of the $\angle$-fixed point locus of the family
$\calX_{\blambda}\to\calZ_{\blambda}$, which shows that the numerical condition 
$\sum_i\ell(\lambda^i)\geq m{+}4$ implies that the fixed points have codimension at
least two.
Finally, we invoke a key lemma from~\cite{HSZ} to complete the proof.

%%%%%%%%%%%%%%%%%%%%%%%%%%%%%%%%%%%%%%%%%%%%%%%%%%%%%%%%%%%%%%%%%%%%%%%%%%
\subsection{The Lagrangian Grassmannian}
An $m$-dimensional subspace $H$ of $V$ is \demph{Lagrang\-ian} if $\angle(H)=H$.
The set of all Lagrangian subspaces of $V$ forms the \demph{Lagrangian Grassmannian}
\defcolor{$\LG(V)$}.
This is smooth of dimension $\binom{m{+}1}{2}$ and is a homogeneous space for the
symplectic group \defcolor{$\Sp(V)$} of linear transformations of $V$ which preserve
$\langle\,,\,\rangle$. 

An isotropic flag $\Fdot$ and a symmetric partition $\lambda$ determine a Schubert
subvariety $Y_\lambda\Fdot$ of $\LG(V)$, which is the intersection
$X_\lambda\Fdot\cap\LG(V)$,
\[
   \defcolor{Y_\lambda\Fdot}\ :=\ 
   \{H\in\LG(V)\;\mid\; \dim H\cap F_{m+i-\lambda_i}\geq i\ \mbox{ for }i=1,\dotsc,m\}\,.
\]
This has codimension $\defcolor{\|\lambda\|}:=\frac{1}{2}(|\lambda|+\ell(\lambda))$ in
$\LG(V)$. 

We need the following result which partially explains why these Lagrangian Schubert
varieties are relevant for Theorem~\ref{Th:main}.

%%%%%%%%%%%%%%%%%%%%%%%%%%%%%%%%%%%%%%%%%%%%%%%%%%%%%%%%%%%%%%%%%%%%%%%%%%
\begin{proposition}[Lemma~9 of~\cite{HSZ}]
 Let $\lambda$ be a partition and $\Fdot$ a flag.
 Then
\[
    \angle(X_\lambda\Fdot)\ =\ X_{\lambda'}\angle(\Fdot)\,.
\]
\end{proposition}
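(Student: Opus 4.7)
The plan is to translate both sides of the claimed equality $\angle(X_\lambda F_\bullet) = X_{\lambda'} \angle(F_\bullet)$ into incidence conditions on a point $H \in \Gr(m,V)$ and verify they agree. The starting point is the basic fact that for any two subspaces $W_1, W_2 \subseteq V$,
\[
   \angle(W_1) \cap \angle(W_2) \;=\; \angle(W_1 + W_2)\,,
\]
which together with $\dim \angle(W) = 2m - \dim W$ and the formula $\dim(W_1+W_2)=\dim W_1 + \dim W_2 - \dim(W_1\cap W_2)$ gives, for $H \in \Gr(m,V)$,
\[
   \dim\bigl(\angle(H) \cap \angle(F_j)\bigr) \;=\; \dim(H\cap F_j) + m - j.
\]
Since the $k$-plane of $\angle(F_\bullet)$ is $\angle(F_{2m-k})$, substituting $j = 2m-k$ gives the analogous formula for the flag $\angle(F_\bullet)$.

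Next I would unpack what it means for $\angle(H)$ to lie in $X_{\lambda'}\angle(F_\bullet)$. Using the previous formula with $k = m+i-\lambda'_i$, the condition $\dim(\angle(H) \cap (\angle F_\bullet)_{m+i-\lambda'_i}) \geq i$ becomes
\[
   \dim\bigl(H \cap F_{m-i+\lambda'_i}\bigr) \;\geq\; \lambda'_i \qquad (i=1,\dots,m).
\]
So the proposition reduces to the purely combinatorial equivalence: for any non-decreasing function $d \colon \{0,\dots,2m\} \to \{0,\dots,m\}$ with unit steps (namely $d(k) = \dim(H\cap F_k)$),
\[
   d(m+i-\lambda_i) \geq i \text{ for all }i \iff d(m-j+\lambda'_j) \geq \lambda'_j \text{ for all }j.
\]

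For the forward direction I would fix $j$ with $\lambda'_j \geq 1$, set $i_0 := \lambda'_j$ (the number of parts $\lambda_i \geq j$), and note $\lambda_{i_0}\geq j$ implies $m+i_0-\lambda_{i_0}\leq m-j+\lambda'_j$, so monotonicity of $d$ and the hypothesis give $d(m-j+\lambda'_j) \geq d(m+i_0-\lambda_{i_0}) \geq i_0 = \lambda'_j$. The reverse direction is the same argument with $\lambda$ and $\lambda'$ interchanged, using $(\lambda')' = \lambda$. The only non-routine step is the combinatorial equivalence; it is essentially a restatement of the transpose bijection between the subsets $\{m+i-\lambda_i\}$ and their shifted complements in $\{1,\dots,2m\}$, and I expect the proof in \cite{HSZ} to phrase it in that language rather than by the direct monotonicity argument above.
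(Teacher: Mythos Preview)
Your argument is correct. One small point: the symmetry you invoke for the reverse implication is not quite literal at the combinatorial level, since conditions of the form $d(m+i-\lambda_i)\geq i$ and $d(m-j+\lambda'_j)\geq\lambda'_j$ are not interchanged merely by swapping $\lambda$ and $\lambda'$. The cleanest way to close this is geometric: once the forward inclusion $\angle(X_\lambda F_\bullet)\subseteq X_{\lambda'}\angle(F_\bullet)$ is established for all $\lambda$ and $F_\bullet$, apply it to $\lambda'$ and $\angle(F_\bullet)$ and use that $\angle$ is an involution. Alternatively, your direct argument does go through for the reverse direction if you use the unit-step (Lipschitz) property $d(a)-d(b)\leq a-b$ rather than bare monotonicity: with $j_0=\lambda_i$ one gets $d(m+i-\lambda_i)\geq d(m-\lambda_i+\lambda'_{\lambda_i})-(\lambda'_{\lambda_i}-i)\geq \lambda'_{\lambda_i}-(\lambda'_{\lambda_i}-i)=i$.

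As for comparison with the paper: this proposition is not proved here at all---it is simply quoted as Lemma~9 of~\cite{HSZ}. So there is no in-paper argument to compare against; your direct verification via the dimension identity $\dim(\angle(H)\cap\angle(F_j))=\dim(H\cap F_j)+m-j$ and the transpose combinatorics is a complete and self-contained proof where the paper offers only a citation.
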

%%%%%%%%%%%%%%%%%%%%%%%%%%%%%%%%%%%%%%%%%%%%%%%%%%%%%%%%%%%%%%%%%%%%%%%%%%

Thus if $\lambda$ is symmetric and $\Fdot$ isotropic, then 
$\angle(X_\lambda\Fdot)=X_\lambda\Fdot$ and $Y_\lambda=(X_\lambda\Fdot)^\angle$, the set
of points of $X_\lambda\Fdot$ that are fixed by $\angle$.
This has the following consequence.

%%%%%%%%%%%%%%%%%%%%%%%%%%%%%%%%%%%%%%%%%%%%%%%%%%%%%%%%%%%%%%%%%%%%%%%%%%
\begin{corollary}
 The Lagrangian involution permutes the solutions to a symmetric Schubert problem given by
 isotropic flags.
\end{corollary}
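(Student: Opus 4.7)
The plan is to apply the preceding Proposition termwise to the intersection defining the solution set, using the symmetry and isotropy hypotheses.

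First I would fix a symmetric Schubert problem $\blambda=(\lambda^1,\dotsc,\lambda^s)$ with isotropic flags $\Fdot^1,\dotsc,\Fdot^s$, so that $(\lambda^i)'=\lambda^i$ and $\angle(\Fdot^i)=\Fdot^i$ for every $i$. Then for each $i$ the Proposition gives
\[
  \angle(X_{\lambda^i}\Fdot^i)\ =\ X_{(\lambda^i)'}\angle(\Fdot^i)\ =\ X_{\lambda^i}\Fdot^i,
\]
so each individual Schubert variety in~\eqref{Eq:SchubProblem} is preserved by the Lagrangian involution.

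Next I would observe that the Lagrangian involution is a (set-theoretic) involution of $\Gr(m,V)$, so it commutes with intersection. Hence it preserves the intersection
\[
  X_{\lambda^1}\Fdot^1\,\cap\,\dotsb\,\cap\,X_{\lambda^s}\Fdot^s,
\]
and therefore restricts to a self-map of the (finite) solution set. Since $\angle$ is an involution, this restriction is a bijection, i.e.\ a permutation, which is the conclusion. There is essentially no obstacle here: the statement is a direct consequence of the Proposition together with the definitions of \emph{symmetric} partition and \emph{isotropic} flag; the only mild point to mention is that $\angle$ is indeed an involution on $\Gr(m,V)$, which follows from $\dim W+\dim\angle(W)=2m$ applied to $m$-planes and the nondegeneracy of $\langle\,,\,\rangle$.
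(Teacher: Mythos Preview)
Your proposal is correct and follows exactly the paper's approach: the paper observes immediately before the Corollary that for symmetric $\lambda$ and isotropic $\Fdot$ one has $\angle(X_\lambda\Fdot)=X_\lambda\Fdot$, and states the Corollary as a direct consequence without further argument. Your write-up simply makes explicit the trivial step of passing from each factor to the intersection.
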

%%%%%%%%%%%%%%%%%%%%%%%%%%%%%%%%%%%%%%%%%%%%%%%%%%%%%%%%%%%%%%%%%%%%%%%%%%

%%%%%%%%%%%%%%%%%%%%%%%%%%%%%%%%%%%%%%%%%%%%%%%%%%%%%%%%%%%%%%%%%%%%%%%%%%
\subsection{Families associated to Schubert problems}

Let $\blambda$ be a symmetric Schubert problem.
We construct families whose bases parameterize all instances of
$\blambda$ given by isotropic flags and whose fibers are the solutions to the
corresponding instance.

The set \defcolor{$\Fl$} of isotropic flags in $V$ is a flag manifold for
$\Sp(V)$ of dimension $m^2$.
Define
\[  
   \defcolor{\calU^*_{\blambda}} \ :=\ 
    \{ (\Fdot^1,\dotsc,\Fdot^s,\, H)\,\mid\, \Fdot^i\in\Fl\ \mbox{ and }\ 
        H\in X_{\lambda^i}\Fdot^i\ \mbox{ for }i=1,\dotsc,s\}\,.
\]
We have the two projections
\[
  \defcolor{\pi}\ \colon\ \calU^*_{\blambda}\ \longrightarrow\ (\Fl)^s
   \quad\mbox{ and }\quad
  \defcolor{\pr}\ \colon\ \calU^*_{\blambda}\ \longrightarrow\ \Gr(m,V)\,.
\]
For isotropic flags $\Fdot^1,\dotsc,\Fdot^s$, the fiber
$\pr(\pi^{-1}(\Fdot^1,\dotsc,\Fdot^s))$ consists of the solutions 
 \begin{equation}\label{Eq:fibre}
  X_{\lambda^1}\Fdot^1\ \cap\ 
  X_{\lambda^2}\Fdot^2\ \cap\ \dotsb \ \cap\ 
  X_{\lambda^s}\Fdot^s
 \end{equation}
to the instance of the Schubert problem $\blambda$ given by the flags
$\Fdot^1,\dotsc,\Fdot^s$. 

As $\Sp(V)$ does not act transitively on $\Gr(m,V)$, we cannot use 
Kleiman's Theorem~\cite{KL74} to conclude that an intersection~\eqref{Eq:fibre} given by
general flags is transverse.
Transversality follows instead from the main result of~\cite{General}.
Consequently, there is a nonempty Zariski open subset $\defcolor{\calO}\subset(\Fl)^s$
consisting of $s$-tuples of isotropic flags for which the intersection~\eqref{Eq:fibre} is
transverse and therefore consists of $d(\blambda)$ points.

We seek a family $\calX\to\calZ$ of instances of %the Schubert problem
$\blambda$ where $\dim\calX=\dim\calZ$ and $\calZ$ is irreducible with $\calZ(\R)$
parameterizing all real instances of $\blambda$.
Since we cannot easily compute the dimension of $\calU^*_{\blambda}$, we replace
it by a possibly smaller set.
Define \defcolor{$\calU_{\blambda}$} to be the closure of $\pi^{-1}(\calO)$ in
$\calU^*_{\blambda}$.
Restricting $\pi$ to $\calU_{\blambda}$ gives the dominant map
 \begin{equation}\label{Eq:almost}
   \pi\ \colon\ \calU_{\blambda}\ \longrightarrow\ (\Fl)^s\,,
 \end{equation}
where a fiber $\pi^{-1}(\Fdot^1,\dotsc,\Fdot^s)$ is a subset of the
intersection~\eqref{Eq:fibre} and is equal to it when the intersection is finite.
Thus $\dim\calU_{\blambda}=\dim(\Fl)^s=s\cdot m^2$.

This family~\eqref{Eq:almost} has the fault that the real points
of its base $(\Fl)^s$ are $s$-tuples of real isotropic flags, which are only some of the
flags giving real instances of $\blambda$.

Let $\defcolor{S_{\blambda}}\subset S_s$ be the group of permutations $\sigma$ of
$\{1,2,\dotsc,s\}$ with $\lambda^i=\lambda^{\sigma(i)}$ for all $i=1,\dotsc,s$.
Then $S_{\blambda}\simeq S_{a_1}\times\dotsb\times S_{a_t}$ where $\blambda$ consists of
$t$ distinct partitions $\mu^1,\dotsc,\mu^t$ with $\mu^i$ occurring $a_i$ times.
Then $S_{\blambda}$ acts on the families 
$\calU^*_{\blambda},\calU_{\blambda} \to (\Fl)^s$, preserving fibers,
\[
   \pr(\pi^{-1}( \Fdot^1,\dotsc,\Fdot^s))\ =\ 
   \pr(\pi^{-1}( \Fdot^{\sigma(1)},\dotsc,\Fdot^{\sigma(s)}))
   \quad\mbox{ for all }\sigma\in S_{\blambda}\,.
\]
Define \defcolor{$\pi\colon\calX_{\blambda}\to\calZ_{\blambda}$} to be the quotient of 
$\calU_{\blambda} \to (\Fl)^s$ by the group $S_{\blambda}$.

%%%%%%%%%%%%%%%%%%%%%%%%%%%%%%%%%%%%%%%%%%%%%%%%%%%%%%%%%%%%%%%%%%%%%%%%%%
\subsection{Proof of Theorem~\ref{Th:main}}

We defer the proof of the following lemma.

%%%%%%%%%%%%%%%%%%%%%%%%%%%%%%%%%%%%%%%%%%%%%%%%%%%%%%%%%%%%%%%%%%%%%%%%%%
\begin{lemma}\label{L:A}
 The map $\pi\colon\calX_{\blambda}\to\calZ_{\blambda}$ is a proper dominant map of real
 varieties of the same dimension with $\calZ_{\blambda}$ smooth and $\calZ_{\blambda}(\R)$
 connected. 
 The Lagrangian involution preserves fibers of $\pi$ 
 and the codimension in $\calX_{\blambda}$ of the $\angle$-fixed points
 $\calX_{\blambda}^\angle$ is at least
 $\frac{1}{2}(\sum_i \ell(\lambda^i)-m)$.
\end{lemma}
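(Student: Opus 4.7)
My plan is to check each clause of the lemma in turn, saving the codimension estimate---the core technical content---for last. The structural properties of $\pi$ are largely formal: $\calU^*_{\blambda}$ is closed in $(\Fl)^s\times\Gr(m,V)$ and $\Gr(m,V)$ is projective, so the first projection is proper; properness descends to $\calU_{\blambda}$ and then to the quotient $\calX_{\blambda}\to\calZ_{\blambda}$ by the finite group $S_{\blambda}$. Dominance follows by construction, and since fibers over $\calO$ are finite of cardinality $d(\blambda)$, one has $\dim\calX_{\blambda}=\dim\calZ_{\blambda}=sm^2$. Smoothness of $\calZ_{\blambda}$ follows from the smoothness of $\Fl$ as a homogeneous space for $\Sp(V)$, with the quotient structure handled as in~\cite{HSZ}. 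For connectivity of $\calZ_{\blambda}(\R)$, note that $\Sp(V,\R)$ is a connected Lie group acting transitively on $\Fl(\R)$, so $\Fl(\R)^s$ is connected and its image in $\calZ_{\blambda}(\R)$ is a connected subset; a general real point corresponds to a tuple on which complex conjugation acts as some involution $\tau\in S_{\blambda}$, and each conjugate pair $(\Fdot^i,\overline{\Fdot^i})$ can be continuously deformed along a path in $\Fl$ terminating in $\Fl(\R)$, connecting it to the fully-real stratum. Finally, that $\angle$ preserves fibers of $\pi$ is immediate from the Proposition: symmetry of $\lambda^i$ and isotropy of $\Fdot^i$ yield $\angle(X_{\lambda^i}\Fdot^i)=X_{\lambda^i}\Fdot^i$, so $(\Fdot^\bullet,H)\mapsto(\Fdot^\bullet,\angle(H))$ is well-defined on $\calU_{\blambda}$, commutes with $S_{\blambda}$, and descends to a fiber-preserving involution on $\calX_{\blambda}$.

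The substantive step is the codimension estimate. A point $(\Fdot^\bullet,H)\in\calU_{\blambda}$ is $\angle$-fixed precisely when $H\in\LG(V)$, and then the Proposition yields $H\in X_{\lambda^i}\Fdot^i\cap\LG(V)=Y_{\lambda^i}\Fdot^i$ for each $i$. Hence $\calU_{\blambda}^{\angle}$ lies in
\[
\mathcal{V}^*_{\blambda}\;:=\;\bigl\{(\Fdot^1,\dotsc,\Fdot^s,H)\in(\Fl)^s\times\LG(V)\;\big|\;H\in Y_{\lambda^i}\Fdot^i\text{ for all }i\bigr\}.
\]
I would compute $\dim\mathcal{V}^*_{\blambda}$ by projecting to $\LG(V)$: the incidence $\{(\Fdot,H)\in\Fl\times\LG(V)\mid H\in Y_\lambda\Fdot\}$ has total dimension $m^2+\binom{m+1}{2}-\|\lambda\|$ (via its projection to $\Fl$), so by $\Sp(V)$-homogeneity its fiber over any $H\in\LG(V)$ has constant dimension $m^2-\|\lambda\|$. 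Therefore
\[
\dim\mathcal{V}^*_{\blambda}\;=\;\binom{m+1}{2}+\sum_{i=1}^{s}\bigl(m^2-\|\lambda^i\|\bigr)\;=\;sm^2+\tfrac{m(m+1)}{2}-\sum_i\|\lambda^i\|.
\]
Applying $\|\lambda\|=\tfrac12(|\lambda|+\ell(\lambda))$ together with $\sum_i|\lambda^i|=\dim\Gr(m,V)=m^2$ (since $\blambda$ is a Schubert problem) reduces this to $sm^2-\tfrac12(\sum_i\ell(\lambda^i)-m)$. Thus $\calU_{\blambda}^{\angle}$ has codimension at least $\tfrac12(\sum_i\ell(\lambda^i)-m)$ in $\calU_{\blambda}$, and the same bound holds in $\calX_{\blambda}$ since the quotient by $S_{\blambda}$ is by a finite group.

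The delicate point is the smoothness clause for $\calZ_{\blambda}$, since symmetric-group quotients of smooth varieties are typically singular along diagonals; I would address this as in~\cite{HSZ}, either by restricting to the open stratum on which $S_{\blambda}$ acts freely (which suffices for the subsequent topological arguments) or by an orbifold analysis at the diagonal strata. The connectivity argument likewise requires some care in handling the various involution types $\tau\in S_{\blambda}$, but reduces to deforming conjugate pairs of flags one at a time toward the real locus $\Fl(\R)$. All remaining pieces are routine dimension counts.
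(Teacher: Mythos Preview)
Your handling of properness, dominance, equal dimensions, fiber-preservation by $\angle$, and the codimension estimate (via projecting the Lagrangian incidence variety to $\LG(V)$ and computing fiber dimensions) matches the paper's proof essentially verbatim; the paper calls your $\mathcal{V}^*_{\blambda}$ by the name $\calL_{\blambda}$ and performs the identical dimension count.

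The substantive divergence is in the smoothness and connectedness of $\calZ_{\blambda}$. You correctly flag smoothness as delicate but then defer to~\cite{HSZ} or suggest restricting to the free locus or passing to an orbifold. The paper instead gives a direct local argument: for any isotropic flag $\Fdot$, the big Schubert cell $Z_e^\circ\Fdot\subset\Fl$ of flags in general position with $\Fdot$ is isomorphic to $\C^{m^2}$, and any finite set of isotropic flags lies in a common such cell. Hence $(\Fl)^a$ is covered by $S_a$-stable affine opens $(Z_e^\circ\Fdot)^a\simeq(\C^{m^2})^a$, and the paper asserts $(\C^{m^2})^a/S_a\simeq(\C^{m^2})^a$ by reducing to $\C^a/S_a\simeq\C^a$. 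Both smoothness of $\Sym_a(\Fl)$ and connectedness of its real locus are then read off from this local model, and the product structure gives the same for $\calZ_{\blambda}$. Your deformation argument (moving conjugate pairs of flags toward $\Fl(\R)$) is a plausible alternative route to connectedness but does not touch smoothness; restricting to the locus where $S_{\blambda}$ acts freely proves a strictly weaker statement than the lemma asserts; and the appeal to~\cite{HSZ} is not quite on point, since there the base is built from points on a rational curve rather than from the full isotropic flag variety, so the symmetric-product quotients there are smooth for a reason that does not transfer automatically.
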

%%%%%%%%%%%%%%%%%%%%%%%%%%%%%%%%%%%%%%%%%%%%%%%%%%%%%%%%%%%%%%%%%%%%%%%%%%

We recall Lemma~5 from~\cite{HSZ}.

%%%%%%%%%%%%%%%%%%%%%%%%%%%%%%%%%%%%%%%%%%%%%%%%%%%%%%%%%%%%%%%%%%%%%%%%%%
\begin{proposition}\label{P:key}
 Let $f\colon X\to Z$ be a proper dominant map of real varieties of the
 same dimension with $Z$ smooth.
 Suppose that $X$ has an involution $\angle$ preserving the fibers of $f$
 such that the image in $Z$ of the set of $\angle$-fixed points has
 codimension at least $2$. 

 If $y,z\in Z(\R)$ belong to the same connected component of\/$Z(\R)$, the fibers above 
 them are finite and at least one contains no $\angle$-fixed points, then we have 
 \[
   \# f^{-1}(y)\cap X(\R)\ \equiv\ \# f^{-1}(z)\cap X(\R)\ \mod 4\,.
 \]
\end{proposition}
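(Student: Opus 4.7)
The plan is a standard real deformation along a path in $Z(\R)$, exploiting that the commuting involutions of complex conjugation $\sigma$ and $\angle$ on the fibres of $f$ force the (multiplicity-counted) real fibre count to jump by multiples of $4$ at each generic crossing.

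Let $D\subset Z$ denote the discriminant locus of $f$, outside of which $f$ is étale, and let $B:=f(X^\angle)\subset Z$, which by hypothesis has codimension at least $2$. Since $Z$ is smooth and $y,z$ lie in the same connected component of $Z(\R)$, a generic smooth real path $\gamma\colon[0,1]\to Z(\R)$ from $y$ to $z$ meets $D(\R)$ transversely at finitely many smooth points and, because $B(\R)$ has real codimension at least $2$ in the smooth manifold $Z(\R)$, avoids $B$ altogether. The hypothesis that at least one endpoint, say $y$, has no $\angle$-fixed points in its fibre gives $y\notin B$; an application of $\angle$-symmetry (the same one used in the main step below) shows that the real count modulo $4$ is preserved under small real perturbations off $B$, so I may also perturb $z$ slightly off $D\cup B$ without changing the relevant quantity.

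Over any $w\in Z(\R)\setminus(D\cup B)$ the group $\langle\sigma,\angle\rangle\simeq(\mathbb{Z}/2)^2$ acts on the étale fibre $f^{-1}(w)$ with no $\angle$-fixed points. Classifying orbits by stabilizer, the real points (those whose stabilizer contains $\sigma$) occur only in $\angle$-swapped pairs of real points, so the real count is even on $Z(\R)\setminus(D\cup B)$; the aim is to upgrade this to mod-$4$ invariance along $\gamma$.

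The key step is the local analysis at a transverse crossing $t_0\in\gamma^{-1}(D(\R))$. Near $t_0$ the fibre undergoes finitely many local collisions of points; because $\gamma(t_0)\notin B$, no collision point is $\angle$-fixed, so $\angle$ pairs each local collision with a distinct partner collision over the same parameter value $t_0$. A single real collision (a complex-conjugate pair meeting on the real axis and emerging as two real points, or vice versa) changes the real count by $0$ or $\pm 2$; since $\angle$ is an algebraic automorphism of $X$ commuting with $f$ and with $\sigma$, it transports the germ at one collision to the germ at its $\angle$-partner, so the paired collision contributes the same change at the same $t_0$. Thus each $\angle$-pair of collisions contributes $0$ or $\pm 4$ to the jump in the real count, and summing over the finitely many crossings along $\gamma$ yields the desired congruence. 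The main obstacle is extending this local analysis to higher-order collisions: one needs $\angle$ to act freely on the set of points involved in any single local collision, which is precisely guaranteed by the codimension-two avoidance of $B$.
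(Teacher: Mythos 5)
The paper does not prove this proposition; it is quoted verbatim (essentially) as Lemma~5 of \cite{HSZ}, together with a remark that the weakening from ``no $\angle$-fixed points in \emph{either} fiber'' to ``in \emph{at least one} fiber'' requires a separate limiting argument in the style of Corollary~7 of \cite{HSZ}. Your path-and-wall-crossing argument is a reasonable reconstruction of the mechanism behind the \emph{unrelaxed} statement: choose a real path avoiding $B=f(X^\angle)$ (possible since $B$ has codimension at least $2$), note that over a crossing point of the discriminant the fiber has no $\angle$-fixed points, so $\angle$ carries each collision cluster to a \emph{disjoint} partner cluster; each cluster's contribution to the jump in the real count is even by $\sigma$-pairing, hence each $\angle$-pair of clusters contributes a multiple of $4$. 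That part is sound, including your correct observation that freeness of $\angle$ on the colliding points is what the codimension-two hypothesis buys you.

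The gap is in the sentence where you dispose of the endpoint that \emph{is} allowed to lie in $B$: you assert that ``$\angle$-symmetry shows that the real count modulo $4$ is preserved under small real perturbations off $B$'' and then perturb $z$ off $D\cup B$. At a real point $x$ of the fiber that is $\angle$-fixed, the nearby cluster is $\angle$-\emph{stable} rather than exchanged with a distinct partner, so the pairing argument you invoke gives nothing there; the $\sigma$-action only forces the change in the real count to be even, not divisible by $4$. This is not a removable technicality. Consider $f\colon\C^2\to\C^2$, $f(u,v)=(u^2-v^2,\,uv)$, with $\angle=-\mathrm{id}$: here $X^\angle=\{0\}$ has image of codimension $2$, the fiber over every nonzero real point consists of exactly $2$ real points among $4$, while the fiber over the origin is the single $\angle$-fixed real point of multiplicity $4$ --- so perturbing off $B$ changes the multiplicity-counted real number from $4$ to $2$. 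Thus the step you elide is exactly the one that cannot be done by the symmetry argument alone; handling the endpoint with $\angle$-fixed points in its fiber requires a genuinely different (limiting) argument using additional structure of the situation at hand, which is the content of the paper's remark following the proposition, and your proof as written establishes only the version in which \emph{both} fibers avoid $\angle$-fixed points.
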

%%%%%%%%%%%%%%%%%%%%%%%%%%%%%%%%%%%%%%%%%%%%%%%%%%%%%%%%%%%%%%%%%%%%%%%%%%

%%%%%%%%%%%%%%%%%%%%%%%%%%%%%%%%%%%%%%%%%%%%%%%%%%%%%%%%%%%%%%%%%%%%%%%%%%
\begin{remark}
 Lemma~5 in~\cite{HSZ} requires that there are no $\angle$-fixed points
 in either fiber $\pi^{-1}(y)$ or $\pi^{-1}(z)$.
 This may be relaxed to only one fiber avoiding $\angle$-fixed points, which may be seen
 using a limiting argument along the lines of the proof of Corollary~7 in~\cite{HSZ}.
\end{remark}
%%%%%%%%%%%%%%%%%%%%%%%%%%%%%%%%%%%%%%%%%%%%%%%%%%%%%%%%%%%%%%%%%%%%%%%%%%

%%%%%%%%%%%%%%%%%%%%%%%%%%%%%%%%%%%%%%%%%%%%%%%%%%%%%%%%%%%%%%%%%%%%%%%%%%
\begin{proof}[Proof of Theorem~$\ref{Th:main}$]
 By Lemma~\ref{L:A}, the hypotheses of Proposition~\ref{P:key} hold, as 
 the inequality $\sum_i\ell(\lambda^i)\geq m{+}4$ implies that 
 $\codim\pi(\calX_{\blambda}^\angle)\geq\codim (\calX_{\blambda}^\angle)\geq 2$.
 Let $(\Fdot^1,\dotsc,\Fdot^s)$ be isotropic flags defining a real
 instance of the Schubert problem $\blambda$ such that~\eqref{Eq:fibre} is
 finite. 
 
 Since this instance is real, for each $i=1,\dotsc,s$ if $\overline{\Fdot^i}=\Fdot^j$,
 then $\lambda^i=\lambda^j$.
 Thus there is a permutation $\sigma\in S_{\blambda}$ such that 
 $\overline{\Fdot^i}=\Fdot^{\sigma(i)}$ for $i=1,\dotsc,s$, and so 
 the image of $(\Fdot^1,\dotsc,\Fdot^s)$ in $\calZ_{\blambda}$ is a real
 point $\defcolor{y}\in\calZ_{\blambda}(\R)$.
 We complete the proof by exhibiting a point $z\in\calZ_{\blambda}(\R)$ for
 which $\pi^{-1}(z)$ consists of $d(\blambda)$ real points, none of which are fixed by
 $\angle$. 

 For distinct $t_1,\dotsc,t_s\in\R$, the intersection
 \begin{equation}\label{Eq:osculating}
  X_{\lambda^1}\Fdot(t_1)\,\cap\,
  X_{\lambda^2}\Fdot(t_2)\,\cap\,\dotsb \,\cap\,
  X_{\lambda^s}\Fdot(t_s)
 \end{equation}
 is transverse and consists of $d(\blambda)$ real points, 
 by the Mukhin-Tarasov-Varchenko Theorem~\cite{MTV}.
 The osculating flags $\Fdot(t_i)$ are real and isotropic, and we would be done if 
 there were no $\angle$-fixed points in~\eqref{Eq:osculating}.
 Equivalently, if the intersection of the corresponding Lagrangian Schubert varieties
 were empty.
 This is unknown, but expected, as it follows from Conjecture~5.1
 in~\cite{So00} which is supported by significant evidence.

 Since the intersection~\eqref{Eq:osculating} is transverse, if 
 $(\Edot^1,\dotsc,\Edot^s)\in(\Fl)^s$ are real isotropic flags that are
 sufficiently close to the osculating flags in~\eqref{Eq:osculating}, then the intersection
 \begin{equation}\label{Eq:newIntersection}
  X_{\lambda^1}\Edot^1\,\cap\,
  X_{\lambda^2}\Edot^2\,\cap\,\dotsb \,\cap\,
  X_{\lambda^s}\Edot^s
 \end{equation}
 is transverse and consists of $d(\blambda)$ real points.
 By Kleiman's Theorem~\cite{KL74} we may also assume that
 $(\Edot^1,\dotsc,\Edot^s)$ are general in that the intersection 
 \begin{equation}\label{Eq:LagrInt}
  Y_{\lambda^1}\Edot^1\,\cap\,
  Y_{\lambda^2}\Edot^2\,\cap\,\dotsb \,\cap\,
  Y_{\lambda^s}\Edot^s
 \end{equation}
 of Lagrangian Schubert varieties is either empty or has dimension
 \begin{eqnarray*}
  \binom{m+1}{2}\ -\ \sum_{i=1}^s\|\lambda^i\|  &=&
   \binom{m+1}{2}\ -\ \frac{1}{2}\sum_{i=1}^s|\lambda^i|
                 \ -\ \frac{1}{2}\sum_{i=1}^s\ell(\lambda^i)\\
    &\leq& \frac{m^2}{2}\ +\ \frac{m}{2}\ -\ \frac{m^2}{2}\ -\ \frac{m}{2}\ -\ 2
    \ =\ -2\,.
 \end{eqnarray*}
 We conclude that~\eqref{Eq:LagrInt} is empty and therefore~\eqref{Eq:newIntersection} contains no
 Lagrangian subspaces.

 If $\defcolor{z}\in\calZ_{\blambda}(\R)$ is the image of 
 $(\Edot^1,\dotsc,\Edot^s)\in(\Fl)^s$, then the fiber $\pi^{-1}(z)$ (which
 is~\eqref{Eq:newIntersection}) consists of $d(\blambda)$ real points, none of which 
 are Lagrangian.
 This completes the proof.
\end{proof}
%%%%%%%%%%%%%%%%%%%%%%%%%%%%%%%%%%%%%%%%%%%%%%%%%%%%%%%%%%%%%%%%%%%%%%%%%%

%%%%%%%%%%%%%%%%%%%%%%%%%%%%%%%%%%%%%%%%%%%%%%%%%%%%%%%%%%%%%%%%%%%%%%%%%%
\begin{proof}[Proof of Lemma~\ref{L:A}]
 Consider the quotient of $(\Fl)^s$ by the group $S_{\blambda}$, which is the 
 product
\[
   \calZ_{\blambda}\ =\ 
    \Sym_{a_1}(\Fl)\ \times\ 
    \Sym_{a_2}(\Fl)\ \times\ \dotsb\ \times\ 
    \Sym_{a_t}(\Fl)\,,
\]
 where \defcolor{$\Sym_a(\Fl)$} is the quotient $(\Fl)^a/S_a$ and $\blambda$ consists of
 $t$ distinct partitions $\mu^1,\dotsc,\mu^t$ with $\mu^i$ occurring $a_i$ times in
 $\blambda$. 
 
 For $\Fdot\in\Fl$, let $\defcolor{Z_e^\circ\Fdot}\subset\Fl$ be those flags in linear general
 position with respect to $\Fdot$. 
 This dense subset of $\Fl$ is a Schubert variety isomorphic to $\C^{m^2}$.
 As $\Fdot$ varies in $\Fl$, these form an affine cover of $\Fl$.
 Given a finite set $\{\Fdot^1,\dotsc,\Fdot^a\}$ of isotropic flags, there is
 an isotropic flag $\Fdot$ that is simultaneously in linear general position with each
 $\Fdot^i$, so that $\{\Fdot^1,\dotsc,\Fdot^a\}\subset Z_e^\circ\Fdot$.
 Thus $(\Fl)^a$ is covered by the $S_a$-invariant affine varieties
 $(Z_e^\circ\Fdot)^a$, each isomorphic to $(\C^{m^2})^a$.
 By descent, this implies that the quotient $\Sym_a(\Fl)=(\Fl)^a/S_a$ is well-defined and
 covered by affine varieties  $(Z_e^\circ\Fdot)^a/S_a$, each isomorphic to
 $(\C^{m^2})^a/S_a\simeq(\C^{m^2})^a$, as $\C^a/S_a\simeq\C^a$.
 It follows that $\Sym_a(\Fl)$ is a smooth irreducible variety whose real points are
 connected which implies the same for $\calZ_{\blambda}$.

 The map $\pi\colon\calU^*_{\blambda}\to(\Fl)^s$ is proper as it comes from a
 projection along a Grassmannian factor.
 Its fibers are preserved by the Lagrangian involution and 
 are equal over points in an $S_{\blambda}$-orbit.
 Both properties hold for $\pi^{-1}(\calO)\to\calO$ (as $\calO$ is $S_{\blambda}$-stable)
 and therefore for $\pi\colon\calU_{\blambda}\to(\Fl)^s$.
 We conclude that $\pi$ descends to the quotient
 $\pi\colon\calX_{\blambda}\to\calZ_{\blambda}$, where it is a proper dominant map and the
 Lagrangian involution preserves its fibers.

 Since $\dim\calU_{\blambda}=\dim(\Fl)^s=s\cdot m^2$ and $S_{\blambda}$ is a finite group,
 we conclude that $\dim\calX_{\blambda}=\dim\calZ_{\blambda}=s\cdot m^2$.

 We study the $\angle$-fixed points of $\calU^*_{\blambda}$ which form the universal family, 
\[
  \defcolor{\calL_{\blambda}}\ :=\ 
       \{ (\Fdot^1,\dotsc,\Fdot^s,\, H)\,\mid\, \Fdot^i\in\Fl\ \mbox{ and }\ 
        H\in Y_{\lambda^i}\Fdot^i\ \mbox{ for }i=1,\dotsc,s\}\,.
\]
 Consider the projection $\pr\colon\calL_{\blambda}\to\LG(V)$.
 Let $H\in\LG(V)$.
 Then
 \begin{eqnarray*}
   \pr^{-1}(H)&=& \{(\Fdot^1,\dotsc,\Fdot^s,\, H)\,\mid\, 
        H\in Y_{\lambda^i}\Fdot^i\ \mbox{ for }i=1,\dotsc,s\}\\
    &\simeq&\prod_{i=1}^s \{\Fdot\in\Fl\,\mid\, H\in Y_{\lambda^i}\Fdot\}\,.
 \end{eqnarray*}
 For $\lambda$ symmetric and $H\in\LG(V)$, define
\[
  \defcolor{Z_\lambda(H)}\ :=\ 
    \{ \Fdot\in\Fl\,\mid\, H\in Y_\lambda\Fdot\}\,.
\]
 This is a Schubert subvariety of $\Fl$ of codimension $\|\lambda\|$.
 Thus
\[
  \pr^{-1}(H)\ =\ Z_{\lambda^1}(H)\,\times\,
    Z_{\lambda^2}(H)\,\times\,\dotsb\,\times\,
    Z_{\lambda^s}(H)\,,
\]
 which has codimension
 $\sum_i\|\lambda^i\|=\frac{1}{2}\sum_i\bigl(|\lambda^i|+\ell(\lambda^i)\bigr)$ in
 $(\Fl)^s$ and is irreducible as each 
 $Z_{\lambda^i}(H)$ is a Schubert variety and is therefore irreducible.
 Thus $\pr\colon\calL_{\blambda}\to\LG(V)$
 exhibits $\calL_{\blambda}$ as a fiber bundle.
 We compute its dimension,
 \begin{eqnarray*}
  \dim\calL_{\blambda} &=& \dim\LG(V)\ +\ \dim \pr^{-1}(H) 
    \ =\ \binom{m+1}{2}\ +\ s\cdot m^2\ -\ \sum_{i=1}^s \|\lambda^i\|\\
    &=& s\cdot m^2\ -\ \frac{1}{2}\Bigl(\sum_{i=1}^s\ell(\lambda^i) - m\Bigr)\,.
 \end{eqnarray*}

 Thus $\dim \calU_{\blambda}\cap\calL_{\blambda}\leq 
  s\cdot m^2-\frac{1}{2}\bigl(\sum_i\ell(\lambda^i)-m\bigr)$.
 As $\calU_{\blambda}\cap\calL_{\blambda}$ is the set of $\angle$-fixed points of
 $\calU_{\blambda}$, $\dim\calU_{\blambda}=m^2$, and $\calX_{\blambda}$ is the quotient
 of $\calU_{\blambda}$ by the finite group $S_{\blambda}$,  the $\angle$-fixed points in
 $\calX_{\blambda}$ have codimension at least  
 $\frac{1}{2}\bigl(\sum_i\ell(\lambda^i)-m\bigr)$.
\end{proof}
%%%%%%%%%%%%%%%%%%%%%%%%%%%%%%%%%%%%%%%%%%%%%%%%%%%%%%%%%%%%%%%%%%%%%%%%%%

%%%%%%%%%%%%%%%%%%%%%%%%%%%%%%%%%%%%%%%%%%%%%%%%%%%%%%%%%%%%%%%%%%%%%%%%%%
\begin{remark}\label{R:last}
 If $\blambda$ is a symmetric Schubert problem,
 the quantity
\[
   \sum_{i=1}^s \|\lambda^i\|\ =\ 
   \frac{1}{2}\sum_{i=1}^s (|\lambda^i|+\ell(\lambda^i))\ =\ 
   \frac{m^2}{2}+\frac{1}{2}\sum_{i=1}^s\ell(\lambda^i)
\]
 is an integer, so $\sum_i\ell(\lambda^i)$ has the same parity as $m$.
 For generic flags $(\Edot^1,\dotsc,\Edot^s)$, the intersection~\eqref{Eq:LagrInt} of
 Lagrangian Schubert varieties is a subset of the intersection~\eqref{Eq:newIntersection}
 of Schubert varieties.
 By Kleiman's Theorem, this gives the inequality
\[
   \binom{m+1}{2}-\sum_{i=1}^s\|\lambda^i\|\ \leq\ m^2-\sum_{i=1}^s|\lambda^i|\,,
\]
 which implies that $m\leq\sum_i\ell(\lambda^i)$.
 Thus the only possibilities for $\sum_i\ell(\lambda^i)$ for which Theorem~\ref{Th:main}
 does not imply a congruence modulo four are $m$ or $m{+}2$.

 When $\sum_i\ell(\lambda^i)=m$, we have $\binom{m+1}{2}=\sum_i\|\lambda^i\|$ so that
 $\blambda$ is a Schubert problem for $\LG(V)$ with $c(\blambda)$ solutions.
 That is, for general isotropic flags $\Edot^1,\dotsc,\Edot^s$ the
 intersection~\eqref{Eq:LagrInt} is transverse and consists of $c(\blambda)$ points.
 When $c(\blambda)\neq 0$ the family $\calU_{\blambda}\to(\Fl)^s$ is reducible:
 $\calL_{\blambda}$ is one component and 
 $\overline{\calU_{\blambda}\smallsetminus\calL_{\blambda}}$ is the other.

For example, the problem 
$\raisebox{-3.5pt}{\includegraphics{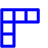}}^2\cdot\includegraphics{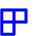}^2=8$ on
$\Gr(4,8)$ has $\sum_i\ell(\lambda^i)=4=m$.
The corresponding problem in $\LG(\C^8)$ has four solutions.
Thus four of the eight solutions on $\Gr(4,8)$ will be isotropic and the other four will not be isotropic.
In our experimentation (number 490 on~\cite{Lower_Exp}) this problem exhibits a congruence modulo four.

 When $\sum_i\ell(\lambda^i)=m+2$, a general intersection~\eqref{Eq:LagrInt} of Lagrangian
 Schubert varieties is empty and $\pi^{-1}(\calO)$ does not meet $\calL_{\blambda}$.
 There are three possibilities.
  \begin{enumerate}
   \item $\calL_{\blambda}\subset\calU_{\blambda}$ and 
         $\pi\colon \calL_{\blambda}\to\pi(\calL_{\blambda})$ generically has finite
         fibers. 
  
   \item $\calL_{\blambda}\subset\calU_{\blambda}$ and 
         $\pi\colon \calL_{\blambda}\to\pi(\calL_{\blambda})$ has positive dimensional fibers. 

   \item $\calL_{\blambda}\not\subset\calU_{\blambda}$.
  \end{enumerate}
 In case (1), $\pi(\calL_{\blambda})$ has codimension one as does the image of the set of
 $\angle$-fixed points of $\calX_{\blambda}$, so Proposition~\ref{P:key} does not
 necessarily imply a congruence modulo four. 
 In cases (2) and (3), $\pi(\calL_{\blambda})$ has codimension two, and so there will be a
 congeuence modulo four.

We have observed some symmetric Schubert problems with  $\sum_i\ell(\lambda^i)=m+2$ that have a
  congruence modulo four  and some that do not (so that (1) holds).
For example,  
$\raisebox{-3.5pt}{\includegraphics{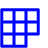}}\cdot
 \raisebox{-3.5pt}{\includegraphics{pictures/311.eps}}\cdot \includegraphics{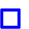}^3=6$ on $\Gr(4,8)$
   has $\sum_i\ell(\lambda^i)=6=m{+}2$  (number 495 on~\cite{Lower_Exp}) and its 
numbers of real solutions do not exhibit a congruence modulo four,
   but
$\raisebox{-3.5pt}{\includegraphics{pictures/321.eps}}^2\cdot\includegraphics{pictures/21.eps}\cdot
  \includegraphics{pictures/1.eps}=8$ on $\Gr(4,8)$ has $\sum_i\ell(\lambda^i)=6=m{+}2$ 
(number 497 on~\cite{Lower_Exp}) and its possible numbers of real solutions 
  appear to be congruent modulo four.
We believe that (3) is unlikely and that (2) holds if and only if there is a congruence modulo four.

\end{remark}
%%%%%%%%%%%%%%%%%%%%%%%%%%%%%%%%%%%%%%%%%%%%%%%%%%%%%%%%%%%%%%%%%%%%%%%%%%

%%%%%%%%%%%%%%%%%%%%%%%%%%%%%%%%%%%%%%%%%%%%%%%%%%%%%%%%%%%%%%%%%%%%%%%%%%%%%%%%%
\providecommand{\bysame}{\leavevmode\hbox to3em{\hrulefill}\thinspace}
\providecommand{\MR}{\relax\ifhmode\unskip\space\fi MR }
% \MRhref is called by the amsart/book/proc definition of \MR.
\providecommand{\MRhref}[2]{%
  \href{http://www.ams.org/mathscinet-getitem?mr=#1}{#2}
}
\providecommand{\href}[2]{#2}

\end{document}